\newcommand{\levy}{L\'{e}vy }
\newcommand{\p}{{\mathbb P}}
\newcommand{\e}{{\mathbb E}}
\newcommand{\D}{{\mathrm d}}
\newcommand{\ep}{{\epsilon}}
\newcommand{\ind}[1]{\mbox{\rm 1}_{\{#1\}}}
\newcommand{\matI}{\mathbb{I}}
\newcommand{\matO}{\mathbb{O}}
\newcommand{\diag}{{\rm diag}}
\newcommand{\1}{{\bs 1}}
\renewcommand{\t}{{\theta}}
\newcommand{\phib}{\boldsymbol \phi}
\newcommand{\bs}{\boldsymbol}
\newcommand{\bzero}{{\bs 0}}
\newcommand{\bv}{{\bs v}}
\newcommand{\bh}{{\bs h}}
\newcommand{\bpi}{{\bs \pi}}
\newcommand{\om}{{\omega}}
\newtheorem{theorem}{Theorem}[section]
\newtheorem{proposition}{Proposition}[section]
\newtheorem{lemma}{Lemma}[section]
\newtheorem{corollary}{Corollary}[section]
\newtheorem{remark}{Remark}[section]
\begin{document}
\title{A risk model with an observer in a Markov environment}

\author{Hansj{\"o}rg Albrecher}
\address{Department of Actuarial Science, University of Lausanne, CH-1015 Lausanne, Switzerland\\
Swiss Finance Institute, University of Lausanne, CH-1015 Lausanne, Switzerland}
\email{hansjoerg.albrecher@unil.ch}

\author{Jevgenijs Ivanovs}
\address{Department of Actuarial Science, University of Lausanne, CH-1015 Lausanne, Switzerland}
\email{jevgenijs.ivanovs@unil.ch}

\begin{abstract}We consider a spectrally-negative Markov additive process as a model of a risk process in random environment.
Following recent interest in alternative ruin concepts, we assume that ruin occurs 
when an independent Poissonian observer sees the process negative, where the observation rate may depend on the state of the environment.
Using an approximation argument and spectral theory we establish an explicit formula for the resulting survival probabilities in this general setting. 
 We also discuss an efficient evaluation of the involved quantities and provide a numerical illustration.
\end{abstract}

\keywords{Markov additive process; level-crossing probabilities; Poissonian observation; ruin probability; occupation times}


\maketitle

\section{Introduction}

In classical risk theory, ruin of an insurance portfolio is defined as the event that the surplus process becomes negative. 
In practice, it may be more reasonable to assume that the surplus value is not checked continuously, but at certain times only. If these times are not fixed deterministically, but are assumed to be epochs of a certain independent
renewal process, then one often still has sufficient analytical structure to obtain explicit expressions for ruin probabilities and related quantities, 
see~\cite{albrecher_thonhauser,actS} for corresponding studies in the framework of the Cram\'er-Lundberg risk model and Erlang inter-observation times.
An alternative ruin concept is studied in~\cite{albrecher_laundscham}, where negative surplus does not necessarily lead to bankruptcy, but bankruptcy is declared at the first instance of an inhomogeneous Poisson process with a rate depending on the surplus value, whenever it is negative.
When this rate is constant, this bankruptcy concept corresponds to the one in \cite{albrecher_thonhauser,actS} for exponential inter-observation times. 
Yet another related concept is the one of Parisian ruin, where ruin is only reported if the surplus process stays negative for a certain amount of time (see e.g. \cite{dawu,loeff}). If this time is assumed to be an 
independent exponential random variable instead of a deterministic value, one recovers the former models with exponential inter-observation times and constant bankruptcy rate function, respectively. Recently, simple 
expressions for the corresponding ruin probability have been derived when the surplus process follows a spectrally-negative \levy process, see \cite{landriault_occupation}.

In this paper we extend the above model and allow the surplus process to be a spectrally-negative Markov additive process.
The dynamics of such a process change according to an external environment process, modeled by a Markov chain, and changes of the latter may also cause a jump in the surplus process. 
We assume that ruin occurs when an independent Poissonian observer sees the surplus process negative, 
and we also allow the rate of observations to depend on the current state of the environment 
(one possible interpretation being that if the environment states refer to different economic conditions, a regulator may increase the observation rates in states of distress). 
Using an approximation argument and the spectral theory for Markov additive processes, we explicitly calculate for any initial capital the survival probability and the probability to reach a given level before ruin in this model. 
The resulting formulas turn out to be quite simple. At the same time, these formulas provide information on certain occupation times of the process, which may be of independent theoretical interest.

In Section~\ref{sec:model} we introduce the model and the considered quantities in more detail. 
Section~\ref{sec:review} gives a brief summary of general fluctuation results for Markov additive processes that are needed later on. 
In Section \ref{secres} we state our main results and discuss their relation with previous results, and the proofs are given in Section \ref{secpro}. 
In Section \ref{seccla} we reconsider the classical ruin concept and show how the present results implicitly extend the classical simple formula for the ruin probability 
with zero initial capital to the case of a Markov additive surplus process. 
Finally, in Section \ref{secnum} we give a numerical illustration of the results for our relaxed ruin concept in a Markov-modulated Cram\'er-Lundberg model.  

\section{The model}\label{sec:model}

Let $(X(t),J(t)),t\geq 0$ be a Markov additive process (MAP), where $X(t)$ is a surplus process and $J(t)$ is an irreducible Markov chain on $n$ states representing the environment, see e.g.~\cite{asm_ruin}.
While $J(t)=i$, $X(t)$ evolves as some \levy process $X_i(t)$, and $X(t)$ has a jump distributed as $U_{ij}$ when $J(t)$ switches from $i$ to $j$. Consequently, $X(t)$ has stationary and independent increments given the corresponding states of the environment. We assume that $X(t)$ has no positive jumps, and that none of the processes $X_i(t)$ is a non-increasing \levy process. 
The latter assumption allows to simplify notation and to avoid some tedious algebraic manipulations. 
Note that the Markov-modulated Cram\'er-Lundberg risk model with \begin{align}\label{cl}
&X(t) = u+\int_0^t c_{J(v)}\, dv-  \sum_{j=1}^{N(t)}Y_{j}
\end{align} is a particular case of the present framework, where $u$ is the initial capital of an insurance portfolio, $c_i>0$ is the premium density in state $i$, 
$N(t)$ is an inhomogeneous Poisson process with claim arrival intensity $\beta_i$ in state $i$, and $Y_{j}$ are independent claim sizes 
with distribution function $F_i$ if at the time of occurrence the environment is in state $i$ (in this case $U_{ij}\equiv 0$ for all $i,j$), see \cite{asm_ruin}.

Write $\e_u[Y;J(t)]$ for a matrix with $ij$th element $\e(Y\ind{J(t)=j}|J(0)=i,X(0)=u)$, where $Y$ is an arbitrary random variable, 
and $\p_u[A,J(t)]=\e_u[{\rm 1}_A;J(t)]$ for the probability matrix corresponding to an event~$A$. If $u=0$, then we simply drop the subscript. 
We write $\matI,\matO,\1,\bzero$ for an identity matrix, a zero matrix, a column vector of ones and a column vector of zeros of dimension $n$, respectively.
For $x\geq 0$ define the first passage time above $x$ (below $-x$) by 
\[\tau_x^\pm=\inf\{t\geq 0:\pm X(t)>x\}.\]

As in \cite{actS} we assume that ruin occurs when an independent Poissonian observer sees $X(t)$ negative, 
where in our setup the rate of observations depends on the state of~$J(t)$, i.e.\ the rate is $\om_{J(t)}\geq 0$ for given $\om_1,\ldots,\om_n$.
Recall that a Poisson process of rate $\om$ has no jumps (observations) in some Borel set $B\subset [0,\infty)$ with probability $\exp(-\om \int_B\D t)$.
Hence the probability of survival (non-ruin) in our model with initial capital $u$ is given by the column vector
\begin{align}
&{\boldsymbol \phi}(u)=\e_u e^{-\sum_{j}\om_jA_j},\quad \text{where}\, A_j:=\int_0^\infty\ind{X(t)<0,J(t)=j}\D t,
\end{align}
which follows by conditioning on the $A_j$s. The $i$th component of this vector refers to the probability of survival with initial state $J(0)=i$.
Define for any $u\leq x$ the $n\times n$ matrix 
\begin{align}\label{eq:R}
&R(u,x):=\e_u [e^{-\sum_{j}\om_jA_j(x)};J(\tau_x^+)], \quad \text{with}\,\;A_j(x):=\int_0^{\tau_x^+}\ind{X(s)<0,J(s)=j}\D s,
\end{align}
so $R(u,x)$ is the matrix of probabilities of reaching level $x$ without ruin, when starting at level $u$. 

It is known that $X(t)/t$ converges to a deterministic constant $\mu$ (the asymptotic drift of $X(t)$) a.s.\ as $t\rightarrow \infty$, independently of the initial state $J(0)$.
If $\mu< 0$, then $X(t)\rightarrow-\infty$ a.s., so $A_j\rightarrow\infty$ a.s. for all~$j$, and consequently ruin is certain (unless all $\om_j=0$).
If $\mu\geq 0$ then $\tau_x^+<\infty$ a.s.\ for all $x$, and so 
\[\boldsymbol \phi(u)=\lim_{x\rightarrow \infty}R(u,x)\1.\]
Finally, note that $R(u,x)$ can be interpreted as a joint transform of the occupation times $A_j(x)$. 
Moreover, with the definition $R(x):=R(0,x)$, the strong Markov property and the absence of positive jumps give
\begin{equation}\label{eq:Rxy}
 R(x)R(x,y)=R(y)
\end{equation}
for $0\leq x\leq y$ (see also \cite{gly}). Hence $R(x,y)$ can be expressed in terms of $R(x)$ and $R(y)$, given that these matrices are invertible. 
That is, it suffices to study the matrix-valued function $R(x)$.

\begin{remark}
The present framework can be extended to include positive jumps of phase type, cf.~\cite{asm_ruin}. 
One can convert a MAP with positive jumps of phase type into a spectrally-negative MAP using so-called fluid embedding, 
which amounts to expansion of the state space of $J(t)$, see e.g.~\cite[Sec.~2.7]{thesis}. Next, we set $\om_i=0$ for all the new auxiliary states $i$ and compute the corresponding
survival probability vector for the new model, which -- when restricted to the original states -- yields the survival probabilities of interest.
\end{remark}

\section{Review of exit theory for MAPs}\label{sec:review}

Let us quickly recall the recently established exit theory for spectrally-negative MAPs, which is an extension of the one for scalar \levy processes (see e.g.~\cite[Sec.~8]{kyprianou}).
A spectrally-negative MAP $(X(t),J(t))$ is characterized by a matrix-valued function $F(\t)$ via $\e[e^{\t X(t)};J(t)]=e^{F(\t)t}$ for $\t\geq 0$.
We let $\bpi$ be the stationary distribution of $J(t)$.
It is not hard to see that $J(\tau_x^+),x\geq 0$ is a Markov chain and thus $$\p(J(\tau_x^+)=j|J(0)=i)=(e^{\Lambda x})_{ij}$$ for a certain $n\times n$ transition rate matrix $\Lambda$,
which can be computed using an iterative procedure or a spectral method, see~\cite{breuer_lambda,dauria_lambda} and references therein. 
It is easy to see that $J(\tau_x^+),x\geq 0$ is non-defective (with a stationary distribution $\bpi_\Lambda$) if and only if $\mu\geq 0$.

The two-sided exit problem for MAPs without positive jumps was solved in~\cite{ivanovs_scale}, where it is shown that
\[\p_u[\tau_x^+<\tau_0^-,J(\tau_x^+)]=W(u)W(x)^{-1}\]
for $0\leq u\leq x$ and $x>0$,
where $W(x),x\geq 0$ is a continuous matrix-valued function (called scale function) characterized by the transform
\begin{equation}\label{eq:transform}
 \int_0^\infty e^{-\t x}W(x)\D x=F(\t)^{-1}
\end{equation}
for $\t$ sufficiently large. It is known that $W(x)$ is non-singular for $x>0$ and so is $F(\t)$ in the domain of interest. 
In addition, 
\begin{equation}\label{eq:WRep}
W(x)=e^{-\Lambda x}L(x), 
\end{equation}
where $L(x)$ is a positive matrix increasing (as $x\rightarrow\infty$) to $L$, a matrix of expected occupation times at zero
(note that in the case of the Markov modulated Cram\'er-Lundberg model \eqref{cl}, $c_jL_{ij}$ provides the expected number of times when the surplus is 0 in state $j$ given $J(0)=i$ and $X(0)=0$). 
If $\mu\neq 0$, then $L$ has finite entries and is invertible.
Finally,
\begin{equation}\label{eq:Z}
 \e_u[e^{\t X(\tau_0^-)};\tau_0^-<\tau_x^+,J(\tau_0^-)]=Z(\t,u)-W(u)W(x)^{-1}Z(\t,x),
\end{equation}
where \[Z(\t,x)=e^{\t x}\left(\matI-\int_0^x e^{-\t y}W(y)\D yF(\t)\right)\]
is analytic in $\t$ for fixed $x\geq 0$ in the domain $\Re(\t)>0$.

Importantly, all the above identities hold for defective (killed) MAPs as well, 
i.e.\ when the state space of $J(t)$ is complemented by an absorbing `cemetery' state; 
the original states of $J(t)$ then form a transient communicating class, and the (killing) rate from a state $i$ into the absorbing state is $\om_i\geq 0$. 
We refer to \cite{ivanovs_killing} for applications of the killing concept in risk theory. 

Note that killed MAPs preserve stationarity and independence of increments given the environment state.
Furthermore, we get probabilistic identities of the following type:
\begin{equation}\label{eq:hat_lambda}
e^{\hat\Lambda x}=\hat\p[J(\tau_x^+)]=\e[e^{-\sum_{j}\om_j\int_0^{\tau_x^+}1_{\{J_t=j\}}\D t};J(\tau_x^+)],
\end{equation}
where $\hat\p$ and $\hat\Lambda$ refer to the killed process, and we are still concerned with the original $n$ states only. 
The right hand side of~\eqref{eq:hat_lambda} is similar to the definition of the matrix $R(x)$ in~\eqref{eq:R}; it is also the joint transform of certain occupation times. 
However, $R(x)$ is more complicated, as there the killing is only applied when the surplus process is below zero, 
so with the setup of this paper one leaves the class of defective MAPs (the increments now depend on the current value of $X(t)$).
Let us recall the relation between $F(\t)$ and its killed analogue $\hat F(\t)$:
\begin{align}\label{eq:hat_F}
&\hat F(\t)=F(\t)-\Delta, \qquad \Delta=\diag(\om_1,\ldots,\om_n).
\end{align}

Letting $\Delta_\bpi$ be a diagonal matrix with the stationary distribution vector $\bpi$ of $J$ on the diagonal, 
we note that $\tilde F(\t)=\Delta_\bpi^{-1}F(\t)^T\Delta_\bpi$ corresponds to a time-reversed process, 
which is again a spectrally-negative MAP (with no non-increasing \levy processes as building blocks) with the same asymptotic drift~$\mu$, see~\cite{asm_ruin}.
Using the characterization~\eqref{eq:transform} one can see that the corresponding scale function is given by $\widetilde W(x)=\Delta_\bpi^{-1}W(x)^T\Delta_\bpi$.

\section{Results}\label{secres}

The following main result determines the matrix of probabilities of reaching a level $x$ without ruin:
\begin{theorem}\label{thm:main}
For $x\geq 0$ we have
\[R(x)=\e [e^{-\sum_{j}\om_jA_j(x)};J(\tau_x^+)]=e^{\hat\Lambda x}\left(\matI-\int_0^xW(y)\Delta e^{\hat\Lambda y}\D y\right)^{-1},\] 
where $\hat \Lambda$ corresponds to the killed process with killing rates $\om_i\geq 0$ identified by $\hat F(\t)$ in~\eqref{eq:hat_F}.
\end{theorem}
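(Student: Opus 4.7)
The plan is to derive an integral equation for the extended quantity $R(u,x) := \e_u[e^{-\sum_j \om_j A_j(x)}; J(\tau_x^+)]$ via a first-down-crossing decomposition, evaluate the resulting kernel in closed form by combining spectral calculus on $\hat\Lambda$ with the matrix identity~\eqref{eq:Z}, and finally specialise to $u=0$ using the semigroup relation~\eqref{eq:Rxy}.

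For $0\le u\le x$, I would split the trajectory at $\tau_0^-$. On $\{\tau_x^+<\tau_0^-\}$ the occupation times $A_j(x)$ vanish and the contribution is $W(u)W(x)^{-1}$ via the two-sided exit formula. On $\{\tau_0^-<\tau_x^+\}$, after the down-crossing the surplus must first re-enter $[0,\infty)$ (necessarily at level $0$, by the absence of positive jumps); by spatial homogeneity together with~\eqref{eq:hat_lambda}, the corresponding transform starting from $X(\tau_0^-)\le 0$ is $e^{\hat\Lambda|X(\tau_0^-)|}$, and the strong Markov property at the return time contributes a fresh factor $R(x)$. This yields
\[
R(u,x)=W(u)W(x)^{-1}+Q(u,x)\,R(x),\qquad
Q(u,x):=\e_u\!\left[\tau_0^-<\tau_x^+;\,e^{\hat\Lambda|X(\tau_0^-)|};\,J(\tau_0^-)\right].
\]

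The heart of the proof is then a closed-form evaluation of $Q(u,x)$. For a right eigenvector $v$ of $\hat\Lambda$ with eigenvalue $\nu$, the defining relation $\hat F(-\nu)v=0$ combined with $\hat F=F-\Delta$ produces $F(-\nu)v=\Delta v$, while $e^{\hat\Lambda|X(\tau_0^-)|}v=e^{-\nu X(\tau_0^-)}v$ reduces the transform to the scalar case of~\eqref{eq:Z} at $\t=-\nu$. A short manipulation based on the eigenvector identity $\Delta e^{\hat\Lambda y}e^{-\hat\Lambda a}v=e^{-\nu(a-y)}\Delta v$ then collapses $Z(-\nu,a)v$ into $K(a)e^{-\hat\Lambda a}v$, where $K(a):=\matI-\int_0^a W(y)\Delta e^{\hat\Lambda y}\D y$. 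As eigenvectors of $\hat\Lambda$ span $\mathbb R^n$, this lifts to the matrix identity
\[
Q(u,x)=K(u)e^{-\hat\Lambda u}-W(u)W(x)^{-1}K(x)e^{-\hat\Lambda x}.
\]
Substituting back into the decomposition, invoking $R(u,x)=R(u)^{-1}R(x)$ from~\eqref{eq:Rxy}, and specialising to $u=0$ with $K(0)=\matI=R(0)$ yields $W(0)W(x)^{-1}\bigl[\matI-K(x)e^{-\hat\Lambda x}R(x)\bigr]=0$. Invertibility of $W(0)$ then forces $K(x)e^{-\hat\Lambda x}R(x)=\matI$, i.e.\ $R(x)=e^{\hat\Lambda x}K(x)^{-1}$.

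The main obstacle I anticipate is the degenerate case in which $W(0)$ is singular (for instance when some phase has a Brownian component), so that the cancellation at $u=0$ no longer pins down $R(x)$. This is exactly where the approximation argument alluded to in the introduction would enter: one perturbs the MAP so that $W(0)$ becomes invertible (e.g.\ by adjoining small drifts in each phase), applies the formula to each perturbed model, and passes to the limit using continuity of $W$, $\Lambda$, and $\hat\Lambda$ in the perturbation parameter. A secondary subtlety is the spectral step when $\hat\Lambda$ fails to be diagonalisable, which can be dispatched by a density argument in $\bs\om$ or by a Jordan-form refinement.
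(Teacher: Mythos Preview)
Your decomposition and the spectral evaluation of the kernel $Q(u,x)$ via~\eqref{eq:Z} and the eigenvector identity $F(\gamma)\bv=\Delta\bv$ are correct and coincide with the core of the paper's argument. The paper, too, reduces the problem to showing $Z(\gamma,x)\bv=K(x)e^{-\hat\Lambda x}\bv$ for each (generalised) eigenvector of $-\hat\Lambda$, so this part is essentially identical.

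The genuine difference lies in how the two arguments avoid the degeneracy at the boundary. You specialise your identity to $u=0$ and then need $W(0)$ invertible; to cover the unbounded-variation phases you propose to perturb the \emph{process} (add small drifts) and pass to the limit. The paper instead perturbs the \emph{occupation functional}: it defines $R_\ep(x)$ by starting the clock only below level $-\ep$ and stopping it at~$0$, writes the same renewal-type equation with $W(\ep)$ in place of $W(0)$, and then sends $\ep\downarrow 0$. The crucial technical device is Lemma~\ref{lem:lim0}, which shows $W(\ep)^{-1}\int_0^\ep f(y)W(y)\,\D y\to\matO$; this lets one multiply through by $W(\ep)^{-1}$ before taking the limit and thereby extract the identity $K(x)e^{-\hat\Lambda x}R(x)=\matI$ uniformly for all MAPs, without any case distinction on the variation of the underlying L\'evy components. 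Your perturbation-of-the-MAP route is plausible but would require separate continuity statements for $R(x)$, $W$, $\Lambda$ and $\hat\Lambda$ in the L\'evy characteristics, which you have not supplied; the paper's $\ep$-approximation is self-contained and sidesteps this entirely. The Jordan-chain extension for non-diagonalisable $\hat\Lambda$ is handled explicitly in the paper along the lines you indicate.
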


The vector of survival probabilities according to our relaxed ruin concept has the following simple form:
\begin{theorem}\label{thm:limit}
Assume that the asymptotic drift $\mu>0$, all obervation rates $\om_i$ are positive, and $\Lambda$ and $\hat \Lambda$ do not have a common eigenvalue. 
Then the vector of survival probabilities is given by
\[\phib (0)=\lim_{x\rightarrow\infty}R(x){\bf 1}=U^{-1}\1,\]
where $U$ is the unique solution of
\begin{equation}\label{eq:U}
 \Lambda U-U\hat\Lambda=L\Delta.
\end{equation}
\end{theorem}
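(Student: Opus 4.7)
The plan is to start from the explicit representation in Theorem~\ref{thm:main} and pass to the limit $x\rightarrow\infty$, with the Sylvester equation organising the asymptotic analysis.

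First, I would substitute $W(y) = e^{-\Lambda y}L(y)$ from \eqref{eq:WRep}, and note that differentiating $-e^{-\Lambda y}Ue^{\hat\Lambda y}$ gives $e^{-\Lambda y}(\Lambda U - U\hat\Lambda)e^{\hat\Lambda y} = e^{-\Lambda y}L\Delta e^{\hat\Lambda y}$. Integrating over $[0,x]$ yields the integrated Sylvester identity
\[
\int_0^x e^{-\Lambda y}L\Delta e^{\hat\Lambda y}\D y \;=\; U - e^{-\Lambda x}Ue^{\hat\Lambda x}.
\]
Splitting $L(y) = L + (L(y)-L)$ in the integrand of $M(x):=\matI - \int_0^x W(y)\Delta e^{\hat\Lambda y}\D y$, one obtains
\[
M(x) = (\matI - U) + e^{-\Lambda x}Ue^{\hat\Lambda x} - E(x),\qquad E(x):=\int_0^x e^{-\Lambda y}(L(y)-L)\Delta e^{\hat\Lambda y}\D y.
\]

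Next, I would exploit the Markov identity $\phib(u) = R(u)^{-1}\phib(0)$, a direct consequence of \eqref{eq:Rxy}, together with the boundary condition $\phib(u)\rightarrow\1$ as $u\rightarrow\infty$ (which holds because $\mu>0$ implies $X(t)\rightarrow\infty$ a.s.). Writing $R(u)^{-1} = M(u)e^{-\hat\Lambda u}$ from Theorem~\ref{thm:main} and using the decomposition of $M(u)$ above, the middle term simplifies via $e^{\hat\Lambda u}e^{-\hat\Lambda u} = \matI$ and one arrives at
\[
\phib(u) = [\matI - U - E(u)]e^{-\hat\Lambda u}\phib(0) + e^{-\Lambda u}U\phib(0).
\]

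The final step is to pass to the limit $u\rightarrow\infty$. Since all $\om_i>0$, the matrix $\hat\Lambda$ is strictly stable, so $e^{-\hat\Lambda u}$ grows exponentially in every direction. Meanwhile, $\Lambda\1=\bzero$ (because $\mu>0$ makes $J(\tau_u^+)$ non-defective), so $e^{-\Lambda u}$ acts as the identity on $\mathrm{span}(\1)$ but grows on its complement. Consequently, the second term $e^{-\Lambda u}U\phib(0)$ has a finite limit precisely when $U\phib(0)\in\mathrm{span}(\1)$, i.e.\ $U\phib(0)=c\1$ for some scalar $c$, and the limit is then $c\1$ (in fact the expression is constant in $u$). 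A spectral-gap argument using the non-common-eigenvalue hypothesis between $\Lambda$ and $\hat\Lambda$, combined with the exponential decay of $e^{\hat\Lambda y}$ controlling the error $E(u)$ through $L(y)\rightarrow L$, shows that the first bracketed term vanishes in the limit. Together with $\phib(u)\rightarrow\1$, this forces $c=1$, so $U\phib(0)=\1$ and hence $\phib(0)=U^{-1}\1$.

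The main obstacle is precisely this spectral analysis: rigorously showing that $[\matI-U-E(u)]e^{-\hat\Lambda u}\phib(0)$ vanishes as $u\rightarrow\infty$, and that $U\phib(0)$ must lie in $\mathrm{span}(\1)$ in the first place. Both steps hinge on comparing the exponential decay of $e^{\hat\Lambda y}$ with the growth of $e^{-\Lambda u}$ on $\bpi_\Lambda^\perp$, and exploit the assumption that $\Lambda$ and $\hat\Lambda$ share no eigenvalue (which simultaneously guarantees the unique solvability of \eqref{eq:U} and the invertibility of $U$ needed for $\phib(0)=U^{-1}\1$ to be meaningful).
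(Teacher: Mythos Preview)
Your decomposition via the integrated Sylvester identity is elegant and genuinely different from the paper's route, but the argument has a real gap at exactly the point you flag as ``the main obstacle,'' and the sketched fix does not close it.

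The difficulty is a circularity. You know only that the \emph{sum}
\[
\phib(u)=[\matI-U-E(u)]e^{-\hat\Lambda u}\phib(0)+e^{-\Lambda u}U\phib(0)
\]
converges. From this alone you cannot infer that the second term converges separately, so the implication ``second term has a finite limit $\Rightarrow U\phib(0)\in\mathrm{span}(\1)$'' is not available until you have first established, independently, that $[\matI-U-E(u)]e^{-\hat\Lambda u}\phib(0)\to 0$. But $e^{-\hat\Lambda u}$ \emph{grows} in every direction, and you do not know $\phib(0)$; so you would need the matrix $[\matI-U-E(u)]e^{-\hat\Lambda u}$ itself to vanish. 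This requires controlling the rate at which $L(y)\to L$ against the growth of $e^{-\Lambda y}$ inside $E(u)$, and the non-common-eigenvalue hypothesis by itself gives no such quantitative comparison---it only guarantees solvability of the Sylvester equation. Your ``spectral-gap argument'' is not an argument here; it is a restatement of what needs to be proved.

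By contrast, the paper studies $M(x):=e^{\Lambda x}R(x)^{-1}$ and shows \emph{directly}, via eigenpairs of $-\Lambda$ and $-\hat\Lambda$ and L'H\^opital's rule, that $M(x)\to U$ as a matrix; then $R(x)\1=M(x)^{-1}e^{\Lambda x}\1=M(x)^{-1}\1\to U^{-1}\1$ by continuity of the inverse. This sidesteps the circularity entirely. Crucially, the L'H\^opital step only works under the auxiliary assumption that the eigenvalues of $-\hat\Lambda$ have sufficiently large real part (so that $\Re(\gamma)>\Re(\gamma^*)$ for every pair), and the paper removes this assumption afterwards by analytic continuation in the killing parameter. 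Your proposal has no analogue of this step either. If you want to salvage your decomposition, the natural move is to premultiply your identity by the bounded matrix $e^{\Lambda u}$, giving
\[
e^{\Lambda u}\phib(u)-U\phib(0)=e^{\Lambda u}[\matI-U-E(u)]e^{-\hat\Lambda u}\phib(0),
\]
and then prove that the right-hand matrix tends to $\matO$; but that is precisely the paper's convergence $M(x)\to U$, and proving it still requires the eigenpair/L'H\^opital analysis together with the analytic-continuation argument.
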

Equation \eqref{eq:Rxy} then immediately gives 
\begin{corollary}\label{cor:results}
Under the conditions of Theorem \ref{thm:limit} we have for every $u\geq 0$
$$\phib(u)=R(u)^{-1}\phib(0)$$ and for every $0\leq u\leq x$
\[R(u,x)=\left(\matI-\int_0^uW(y)\Delta e^{\hat\Lambda y}\D y\right)e^{\hat\Lambda(x-u)}\left(\matI-\int_0^xW(y)\Delta e^{\hat\Lambda y}\D y\right)^{-1}.\] 
\end{corollary}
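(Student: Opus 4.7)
The plan is to derive both statements of the corollary directly from the strong-Markov factorization $R(u)R(u,x)=R(x)$ recorded in \eqref{eq:Rxy}, combined with the closed-form expression for $R(x)$ given by Theorem \ref{thm:main}. The strategy is first to invert $R(u)$ explicitly, then apply this inverse to the factorization, and finally pass to the limit to obtain the first assertion.

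First I would record the inversion. From Theorem \ref{thm:main},
\[R(u)=e^{\hat\Lambda u}\Bigl(\matI-\int_0^u W(y)\Delta e^{\hat\Lambda y}\D y\Bigr)^{-1},\]
so the parenthesized matrix is invertible (this is implicit in the statement of Theorem \ref{thm:main}), while the matrix exponential $e^{\hat\Lambda u}$ is always invertible. Hence
\[R(u)^{-1}=\Bigl(\matI-\int_0^u W(y)\Delta e^{\hat\Lambda y}\D y\Bigr)e^{-\hat\Lambda u}.\]
Multiplying \eqref{eq:Rxy} on the left by $R(u)^{-1}$ yields the fundamental identity $R(u,x)=R(u)^{-1}R(x)$, valid for all $0\leq u\leq x$.

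For the first claim, I would take this identity, apply it to the vector $\1$, and let $x\to\infty$. Since $R(u)^{-1}$ does not depend on $x$, the limit passes inside the product, and using $\phib(0)=\lim_{x\to\infty}R(x)\1$ from Theorem \ref{thm:limit} I obtain $\phib(u)=R(u)^{-1}\phib(0)$. For the second claim, I would substitute the expressions for $R(u)^{-1}$ and $R(x)$ into $R(u,x)=R(u)^{-1}R(x)$ and use $e^{-\hat\Lambda u}e^{\hat\Lambda x}=e^{\hat\Lambda(x-u)}$ to collapse the two middle matrix exponentials, which immediately gives the stated expression.

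The only non-algebraic point is the invertibility of $R(u)$, and this is already built into Theorem \ref{thm:main}; no fresh analytical difficulty arises. Consequently the corollary really is an immediate consequence of the two preceding results, and the main expository task is simply to arrange the substitution and the limit interchange cleanly rather than to settle any genuine obstacle.
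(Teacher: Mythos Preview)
Your proposal is correct and matches the paper's approach: the paper simply states that the corollary follows immediately from \eqref{eq:Rxy} together with Theorems~\ref{thm:main} and~\ref{thm:limit}, and your write-up spells out exactly that substitution and limit. The invertibility of $R(u)$ that you rely on is indeed guaranteed by the form in Theorem~\ref{thm:main}, as you observe.
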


Equation~\eqref{eq:U} is known as the Sylvester equation in control theory. Under the conditions of Theorem~\ref{thm:limit} it has a unique solution~\cite{rutherford},
which has full rank, because $L\Delta$ has full rank~\cite[Thm.~2]{deSouza}. Moreover, the solution $U$ can be found by solving a system of linear equations with $n^2$ unknowns.
With regard to coefficient matrices, there are two methods to compute $\Lambda$ and $\hat\Lambda$, see Section~\ref{sec:review}. 
In principle, the matrix $L$ can be obtained from $W(x)$, cf. \eqref{eq:WRep}. This method, however, is ineffective and numerically unstable.
In the following we give a more direct way of evaluating $L$.
\begin{proposition}\label{prop:L}
 Let $\mu\neq 0$. Then for a left eigenpair $(\gamma,\bh)$ of $-\Lambda$, i.e.\ $-\bh\Lambda=\gamma\bh$, it holds that 
\[\bh L=\lim_{q\downarrow 0}q\bh F(\gamma+q)^{-1}.\]
More generally, if $\bh_1,\ldots,\bh_j$ is a left Jordan chain of $-\Lambda$ corresponding to an eigenvalue $\gamma$, i.e.\ $-\bh_1\Lambda=\gamma\bh_1$ and $-\bh_i\Lambda=\gamma\bh_i+\bh_{i-1}$ for $i=2,\ldots j$, then
\[\bh_j L=\lim_{q\downarrow 0}q\sum_{i=0}^{j-1}\frac{1}{i!}\bh_{j-i}[F(q+\gamma)^{-1}]^{(i)}.\]
\end{proposition}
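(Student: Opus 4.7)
The proof plan starts from the scale-function Laplace transform identity \eqref{eq:transform} combined with the decomposition \eqref{eq:WRep}, namely
\[F(\theta)^{-1}=\int_0^\infty e^{-\theta x}e^{-\Lambda x}L(x)\,\D x,\]
valid for $\Re(\theta)$ large enough. The strategy is to apply this identity, pre-multiplied by $\bh$ or $\bh_j$, along the direction $\theta=\gamma+q$, turning the right-hand side into the ordinary Laplace transform of a matrix-valued function that converges as $x\to\infty$, and then to invoke the Abelian (final-value) theorem as $q\downarrow 0$. The assumption $\mu\neq 0$ is used here to guarantee that $L(x)\to L$ entrywise with $L$ finite, so $\bh L(x)$ is bounded and convergent.

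For the simple case, since $-\bh\Lambda=\gamma\bh$ one has $\bh e^{-\Lambda x}=e^{\gamma x}\bh$, hence $\bh F(\gamma+q)^{-1}=\int_0^\infty e^{-qx}\bh L(x)\,\D x$, and the Abelian theorem applied componentwise yields $q\bh F(\gamma+q)^{-1}\to\bh L$. For the Jordan-chain case, I would first derive the identity
\[\bh_i\,e^{-\Lambda x}=e^{\gamma x}\sum_{k=0}^{i-1}\frac{x^k}{k!}\,\bh_{i-k},\]
which follows from writing the chain relations in matrix form $H(-\Lambda)=JH$ with $J$ a Jordan block of eigenvalue $\gamma$ and expanding $e^{Jx}$.

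Next, differentiating the transform identity gives $[F(\theta)^{-1}]^{(i)}=\int_0^\infty(-x)^i e^{-\theta x}e^{-\Lambda x}L(x)\,\D x$. Substituting this into the sum on the right-hand side of the claim, using the chain expansion for $\bh_{j-i}e^{-\Lambda x}$, and interchanging the order of summation brings the sum into the form
\[\sum_{m=0}^{j-1}\frac{x^m}{m!}\Bigl(\sum_{i=0}^{m}\binom{m}{i}(-1)^i\Bigr)\bh_{j-m}L(x).\]
The inner sum equals $0$ for $m\geq 1$ and $1$ for $m=0$, so every term except the one with $m=0$ cancels, leaving
\[q\sum_{i=0}^{j-1}\frac{1}{i!}\bh_{j-i}[F(q+\gamma)^{-1}]^{(i)}=q\int_0^\infty e^{-qx}\bh_j L(x)\,\D x,\]
and the Abelian theorem finishes the argument.

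The only step requiring any care is the Abelian passage to the limit: $L(x)$ is monotone increasing with finite limit $L$ (under $\mu\neq 0$), hence bounded, so $\bh_j L(x)$ is bounded (allowing complex $\bh_j$, one argues coordinate by coordinate via real and imaginary parts), and $q\int_0^\infty e^{-qx}f(x)\D x\to f(\infty)$ for any bounded $f$ with a limit at infinity. The algebraic core—the vanishing of the binomial sums for $m\geq 1$—is the mildly non-obvious step, but it is precisely the algebraic reflection of the Jordan structure and collapses the expression exactly into the shape required by the Abelian theorem.
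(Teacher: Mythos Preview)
Your proof is correct and follows essentially the same route as the paper's: both start from $F(\theta)^{-1}=\int_0^\infty e^{-\theta x}e^{-\Lambda x}L(x)\,\D x$, exploit the Jordan-chain action $\bh_{j-i}e^{-\Lambda x}=e^{\gamma x}\sum_k\frac{x^k}{k!}\bh_{j-i-k}$, collapse the resulting double sum via the binomial identity $\sum_{i=0}^m\binom{m}{i}(-1)^i=(1-1)^m$, and finish with the Abelian/final-value theorem using boundedness of $L(x)$. The only point the paper makes slightly more explicit is that the transform identity, initially valid only for large $\Re(\theta)$, extends by analytic continuation to $\theta=\gamma+q$ precisely because $L(x)$ is bounded---an ingredient you state but do not quite name as a continuation step.
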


\begin{remark}
 Consider the special case $n=1$, i.e. $X(t)$ is a spectrally-negative \levy process with Laplace exponent $F(\t)=\log \e e^{\t X(1)}$, with observation rate $\om$. 
Then $\hat\Lambda=-\Phi(\om)$, 
where $\Phi(\cdot)$ is the right-inverse of $F(\t)$, i.e.\ $F(\Phi(\om))=\om$. According to Theorem~\ref{thm:main} we have
\begin{equation}\label{forr}
 R(x)=e^{-\Phi(\om) x}/\left(1-\om\int_0^xe^{-\Phi(\om) y}W(y)\D y\right)=1/Z(\Phi(\om),x).
\end{equation}
Note that $1/Z(\t,x)$ is a certain transform corresponding to $X(t)$ reflected at zero at the time of passage over level $x$, see~\cite{ivanovs_scale}, which may lead one to an alternative direct probabilistic derivation of \eqref{forr}.
Finally, if $\mu=\e X(1)>0$ then $\Lambda=0$ and hence $L=1/F'(0)=1/\mu$ according to Proposition~\ref{prop:L}.
Accordingly, in this case Theorem~\ref{thm:limit} reduces to
\[\phi(0)=\e \exp\left(-\om\int_0^\infty\ind{X(t)<0}\D t\right)=\frac{\Phi(\om)}{\om}\mu,\]
which coincides with~\cite[Thm.\ 1]{landriault_occupation}.
\end{remark}


\section{Proofs}\label{secpro}

The proofs rely on a spectral representation of the matrix $\hat\Lambda$, which we quickly review in the following.
Let $\bv_1,\ldots,\bv_j$ be a Jordan chain of $-\hat\Lambda$ corresponding to an eigenvalue $\gamma$, i.e.\ $-\hat\Lambda\bv_1=\gamma\bv_1$ and $-\Lambda\bv_i=\gamma\bv_i+\bv_{i-1}$ for $i=2,\ldots j$.
From the classical theory of Jordan chains we know that 
\begin{equation}\label{eq:classical_jp}
 e^{-\hat\Lambda x}\bv_j=\sum_{i=0}^{j-1}\frac{x^i}{i!}e^{\gamma x}\bv_{j-i}
\end{equation}
for any $x\in\mathbb R$ and $j=1,\ldots,k$, and in particular $e^{-\hat\Lambda x}\bv_1=e^{\gamma x}\bv_1$.
Moreover, this Jordan chain turns out to be a generalized Jordan chain of an analytic matrix function $\hat F(\theta),\Re(\theta)>0$ corresponding to a generalized eigenvalue~$\gamma$, 
i.e.\ for any $j=1,\ldots,k$ it holds that
\begin{equation}\label{eq:Fjp}
 \sum_{i=0}^{j-1}\frac{1}{i!}\hat F^{(i)}(\gamma)\bv_{j-i}=\sum_{i=0}^{j-1}\frac{1}{i!}F^{(i)}(\gamma)\bv_{j-i}-\Delta\bv_j=\bs 0
\end{equation}
and in particular $F(\gamma)\bv_1=\Delta\bv_1$, see~\cite{dauria_lambda} for details.

\begin{proof}[Proof of Proposition~\ref{prop:L}]
 Observe that $\bh e^{-\Lambda x}=e^{\gamma x}\bh$ and so~\eqref{eq:transform} and~\eqref{eq:WRep} yield
\[\bh F(\t)^{-1}=\int_0^\infty e^{-\t x}e^{\gamma x}\bh L(x)\D x\]
for large enough $\t$. Since $L(x)$ is bounded from above by $L$, this equation can be analytically continued to $\Re(\t)>\Re(\gamma)$ with $F(\t)$ non-singular.
Hence for small enough $q>0$ we can write
\[q\bh F(q+\gamma)^{-1}=q\int_0^\infty e^{-qx}\bh L(x)\D x=\bh \e L(e_q),\]
where $e_q$ is an exponentially distributed r.v.\ with parameter $q$. Letting $q\downarrow 0$ completes the proof of the first part.

According to~\eqref{eq:classical_jp} we have $\bh_{j-i} e^{-\Lambda x}=\sum_{k=0}^{j-i-1}\frac{x^k}{k!}e^{\gamma x}\bh_{j-i-k}$.
Next, consider
\begin{align*}
&\bh_{j-i}[F(\theta)^{-1}]^{(i)}=\int_0^\infty (-x)^ie^{-\t x}\bh_{j-i}e^{-\Lambda x}L(x)\D x\\&=\sum_{k=i}^{j-1}\frac{(-1)^i}{(k-i)!}\bh_{j-k}\int_0^\infty x^k e^{-\t x+\gamma x}L(x)\D x, 
\end{align*}
where differentiation under the integral sign can be justified using standard arguments. 
Finally,
\begin{align*}
&\sum_{i=0}^{j-1}\frac{1}{i!}\bh_{j-i}[F(\theta)^{-1}]^{(i)}=\sum_{k=0}^{j-1}\sum_{i=0}^k\frac{(-1)^i}{i!(k-i)!}\bh_{j-k}\int_0^\infty x^k e^{-\t x+\gamma x}L(x)\D x\\
&=\bh_j\int_0^\infty e^{-\t x+\gamma x}L(x)\D x, 
\end{align*}
because the second sum is $(1-1)^k=0$ for $k\geq 1$.
The final step of the proof is the same as in the case of $j=1$.
\end{proof}

The proof of Theorem~\ref{thm:main} relies on an approximation idea, which has already appeared in various papers, see e.g.~\cite{breuer_dividend,breuer_exit_MMBM,landriault_occupation}.
We consider an approximation $R_\ep(x)$ of the matrix $R(x)$. When computing the occupation times we start the clock when $X(t)$ goes below $-\ep$ (rather than 0), but stop it when $X(t)$ reaches the level~$0$.
Mathematically, we write, using the strong Markov property,
\begin{align*}
&R_\ep(x)=\p[\tau_x^+<\tau_\ep^-,J(\tau_x^+)]\\&+\int_{-\infty}^{-\ep} \left(\p[\tau_\ep^-<\tau_x^+,X(\tau_\ep^-)\in\D y,J(\tau_\ep^-)] 
\e_y[e^{-\sum_{j}\om_j\int_0^{\tau_0^+}\ind{J_t=j}\D t};J(\tau_0^+)]\right)R_\ep(x).
\end{align*}
Using the exit theory for MAPs discussed in Section \ref{sec:review} we note that the first term on the right is $W(\ep)W(x+\ep)^{-1}$
and the second, according to~\eqref{eq:hat_lambda}, is
\begin{align*}
\int_{0}^\infty \left(\p_\ep[\tau_0^-<\tau_{x+\epsilon}^+,-X(\tau_0^-)\in\D y,J(\tau_0^-)]e^{\hat\Lambda (y+\ep)}\right)R_\ep(x).
\end{align*}

By the monotone convergence theorem the approximating occupation times converge to $A_j(x)$ as $\ep\downarrow 0$, and then the dominated convergence theorem
implies convergence of the transforms: $R_\ep(x)\rightarrow R(x)$ as $\ep\downarrow 0$ for any $x>0$.
Hence we have
\begin{align}\label{eq:proof_main}
&W(x)\lim_{\ep\downarrow 0}\left(W(\ep)^{-1}\left[\matI-\int_{0}^\infty \left(\p_\ep[\tau_0^-<\tau_{x+\epsilon}^+,-X(\tau_0^-)\in\D y,J(\tau_0^-)]e^{\hat\Lambda(y+\ep)}\right)\right]\right)\\
&\times R(x)=\matI,\nonumber
\end{align}
where we also used continuity of $W(x)$.
We will need the following auxiliary result for the analysis of the above limit.
\begin{lemma}\label{lem:lim0}
Let $f(y),y\geq 0$ be a Borel function bounded around 0. Then
 \[\lim_{\ep\downarrow 0}W(\ep)^{-1}\int_0^\ep f(y)W(y)\D y=\matO.\]
\end{lemma}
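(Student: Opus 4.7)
My plan is to reduce the lemma to a uniform entry-wise boundedness estimate for the matrix ratio $W(\ep)^{-1}W(y)$ on the triangle $0\leq y\leq \ep$. Since $f(y)$ is a scalar and $W(\ep)^{-1}$ is constant in $y$, I would pull it inside the integral and write
\[
W(\ep)^{-1}\int_0^\ep f(y)W(y)\D y=\int_0^\ep f(y)\,W(\ep)^{-1}W(y)\D y.
\]
By hypothesis there exists $\delta>0$ and $M<\infty$ with $|f(y)|\leq M$ on $[0,\delta]$. For $\ep\leq\delta$ the norm of the integral is bounded entry-wise by $M\int_0^\ep \|W(\ep)^{-1}W(y)\|\,\D y$, so it suffices to exhibit a finite constant $C$ such that $\|W(\ep)^{-1}W(y)\|\leq C$ uniformly in $0\leq y\leq\ep\leq\delta$.

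The main obstacle is precisely this uniform bound: a priori $W(\ep)^{-1}$ can blow up as $\ep\downarrow 0$ (for instance when some $X_i$ has a Gaussian component, $W(x)$ vanishes at the origin), so one cannot simply invoke continuity. My route is to exploit time reversal. The exit identity $\p_u[\tau_x^+<\tau_0^-,J(\tau_x^+)]=W(u)W(x)^{-1}$ applied to the time-reversed MAP, whose scale function is $\widetilde W(x)=\Delta_\bpi^{-1}W(x)^T\Delta_\bpi$, gives
\[
\widetilde W(y)\widetilde W(\ep)^{-1}=\widetilde\p_y[\tau_\ep^+<\tau_0^-,J(\tau_\ep^+)],
\]
a substochastic matrix whose entries lie in $[0,1]$. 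Unwinding the similarity and transposing yields
\[
W(\ep)^{-1}W(y)=\Delta_\bpi^{-1}\bigl(\widetilde W(y)\widetilde W(\ep)^{-1}\bigr)^T\Delta_\bpi,
\]
whose entries are bounded in absolute value by $C:=\max_{i,j}\pi_j/\pi_i<\infty$ uniformly in $0\leq y\leq \ep$.

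Combining the two steps, the integrand is entry-wise dominated by $MC$, hence the whole expression is $O(\ep)$ and vanishes as $\ep\downarrow 0$. The only routine checks left are that $\bpi$ has strictly positive components (immediate from irreducibility of $J$) and that the time-reversed MAP satisfies the standing spectrally-negative hypotheses so that its exit identity applies, both of which are recorded at the end of Section~\ref{sec:review}.
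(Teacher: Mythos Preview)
Your proof is correct and follows essentially the same route as the paper: both arguments use time reversal to identify $W(\ep)^{-1}W(y)$, up to the similarity $\Delta_\bpi^{-1}(\cdot)^T\Delta_\bpi$, with the substochastic exit matrix $\widetilde W(y)\widetilde W(\ep)^{-1}=\widetilde\p_y[\tau_\ep^+<\tau_0^-,J(\tau_\ep^+)]$, and then conclude from boundedness of $f$ that the integral is $O(\ep)$. The paper carries out the computation entirely in the time-reversed picture rather than unwinding the similarity, but the substance is identical.
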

\begin{proof}
Consider a scale function $\tilde W(x)=\Delta_\bpi^{-1}W(x)^T\Delta_\bpi$ of the time-reversed process. 
It is enough to show that 
$\lim_{\ep\downarrow 0}\int_0^\ep f(y)\tilde W(y)\D y\tilde W(\ep)^{-1}=0,$
but 
\[\int_0^\ep f(y)\tilde W(y)\D y\tilde W(\ep)^{-1}=\int_0^\ep f(y)\tilde\p_y(\tau_\ep^+<\tau_0^-;J(\tau_\ep^+))\D y,\]
which clearly converges to the zero matrix.
\end{proof}

\begin{proof}[Proof of Theorem~\ref{thm:main}]
First we provide a proof under a simplifying assumption and then we deal with the general case.

\underline{Part I:} Assume that $-\hat\Lambda$ has $n$ linearly independent eigenvectors $\bv$: $-\hat\Lambda\bv=\gamma\bv$. 
Considering~\eqref{eq:proof_main} we observe that the integral multiplied by $\bv$ is given by
\begin{align*}
&\int_{0}^\infty \left(e^{-\gamma(y+\ep)}\p_\ep[\tau_0^-<\tau_{x+\epsilon}^+,-X(\tau_0^-)\in\D y,J(\tau_0^-)]\right)\bv=\\
&e^{-\gamma\ep}\e_\ep [e^{\gamma X(\tau_0^-)};\tau_0^-<\tau_{x+\epsilon}^+,J(\tau_0^-)]\bv =e^{-\gamma\ep}(Z(\gamma,\ep)-W(\ep)W(x+\epsilon)^{-1}Z(\gamma,x+\epsilon))\bv,
\end{align*}
according to~\eqref{eq:Z}. 
Hence the limit in~\eqref{eq:proof_main} multiplied by $\bv$ is given by
\[\lim_{\ep\downarrow 0} W(\ep)^{-1}\int_0^\ep e^{-\gamma y}W(y)\D yF(\gamma)\bv+W(x)^{-1}Z(\gamma,x)\bv=W(x)^{-1}Z(\gamma,x)\bv,\]
according to the form of $Z(\gamma,\ep)$ and Lemma~\ref{lem:lim0}.
Finally, from~\eqref{eq:Fjp} we have
\[Z(\gamma,x)\bv=e^{\gamma x}\bv-\int_0^x W(y)\Delta e^{\gamma (x-y)}\bv\D y=\left(e^{-\hat\Lambda x}-\int_0^x W(y)\Delta e^{\hat\Lambda (y-x)}\D y\right)\bv,\]
which under assumption that there are $n$ linearly independent eigenvectors shows that
\[
 \left(e^{-\hat\Lambda x}-\int_0^x W(y)\Delta e^{\hat\Lambda (y-x)}\D y\right)R(x)=\matI,
\]
completing the proof.

\underline{Part II:}
In general we consider a Jordan chain $\bv_1,\ldots,\bv_j$ of $-\hat\Lambda$ corresponding to an eigenvalue $\gamma$.
Using~\eqref{eq:classical_jp} we see that the integral in~\eqref{eq:proof_main} multiplied by $\bv_j$ is given by
\[\sum_{i=0}^{j-1}\frac{1}{i!}\e_\ep [(X(\tau_0^-)-\ep)^ie^{\gamma(X(\tau_0^-)-\ep)};\tau_0^-<\tau_{x+\epsilon}^+,J(\tau_0^-)]\bv_{j-i},\]
where all the terms can be obtained by considering~\eqref{eq:Z} for $\t=\gamma$, multiplying it by $e^{-\ep\gamma}$ and taking derivatives with respect to $\gamma$.
Again Lemma~\ref{lem:lim0} allows to show that various terms converge to 0, which results in
\begin{equation}\label{eq:ZZ}
 \sum_{i=0}^{j-1}\frac{1}{i!}Z^{(i)}(\gamma,x)\bv_{j-i}
\end{equation}
for the expression on the left of $R(x)$ in~\eqref{eq:proof_main} when multiplied by $\bv_j$.
The definition of $Z(\gamma,x)$ leads to
\begin{align*}
 &Z^{(i)}(\gamma,x)=x^{i}e^{\gamma x}\matI-\sum_{k=0}^{i}\frac{i!}{k!(i-k)!}\int_0^x (x-y)^k e^{\gamma(x-y)}W(y)\D yF^{(i-k)}(\gamma).
\end{align*}
Plugging this in~\eqref{eq:ZZ}, interchanging summation and using~\eqref{eq:Fjp}, we can rewrite~\eqref{eq:ZZ} in the following way:
\[\sum_{i=0}^{j-1}\frac{1}{i!}x^{i}e^{\gamma x}\bv_{j-i}-\sum_{k=0}^{j-1}\frac{1}{k!}\int_0^x (x-y)^k e^{\gamma(x-y)}W(y)\D y\Delta\bv_{j-k},\] 
which is just 
\[\left(e^{-\hat\Lambda x}-\int_0^x W(y)\Delta e^{\hat\Lambda (y-x)}\D y\right)\bv_j\]
according to~\eqref{eq:classical_jp}. The proof is complete since there are $n$ linearly independent vectors in the corresponding Jordan chains.
\end{proof}

\begin{proof}[Proof of Theorem~\ref{thm:limit}]
First, we provide a proof under the assumption that both $-\Lambda$ and $-\hat\Lambda$ have semi-simple eigenvalues, and that the real parts of the eigenvalues of $-\hat\Lambda$ 
are large enough.
Assume for a moment that every eigenvalue $\gamma$ of $-\hat\Lambda$ is such that the transform~\eqref{eq:transform} holds for $\theta=\gamma$.
In the following we will study the limit of $M(x)=e^{\Lambda x}R(x)^{-1}$.

Consider an eigenpair $(\gamma,\bv)$ of $-\hat\Lambda$ and a left eigenpair $(\gamma^*,\bh^*)$ of $-\Lambda$, i.e.\
$-\hat\Lambda\bv=\gamma\bv$ and $-\bh^*\Lambda=\gamma^*\bh^*$.
Then Theorem~\ref{thm:main} implies
\[\bh^*M(x)\bv=\bh^*\left(\matI-\int_0^xe^{-\gamma y}W(y)\D y\Delta\right)\bv e^{(\gamma-\gamma^*) x},\]
where $\Re(\gamma)>\Re(\gamma^*)$ by the above assumption.
Note that the expression in brackets converges to a zero matrix, because of~\eqref{eq:transform} and~\eqref{eq:Fjp}. 
So we can apply L'H\^opital's rule to get
\[\lim_{x\rightarrow \infty}\bh^*M(x)\bv=\frac{1}{\gamma-\gamma^*}\lim_{x\rightarrow \infty}e^{-\gamma^*x}\bh^*W(x)\Delta\bv=\frac{1}{\gamma-\gamma^*}\bh^*L\Delta\bv,\]
where the second equality follows from~\eqref{eq:WRep}. 
Under assumption that all the eigenvalues of $\Lambda$ and $\hat\Lambda$ are semi-simple (there are $n$ eigenvectors in each case), this implies
that $M(x)$ converges to a finite limit $U$ and 
\[\Lambda U-U\hat\Lambda=L\Delta.\]
Since $M(x)^{-1}\1=R(x)\1$ is bounded and $U$ is invertible, we see that the former converges to $U^{-1}\1$.

\underline{Jordan chains:}
When some eigenvalues are not semi-simple, the proof follows the same idea, but the calculus becomes rather tedious. So we only present the main steps.
Consider an arbitrary Jordan chain $\bv_1,\ldots,\bv_k$ of $-\hat\Lambda$ with eigenvalue $\gamma$, and an arbitrary left Jordan chain $\bh^*_1,\ldots,\bh^*_m$ of $-\Lambda$ with eigenvalue~$\gamma^*$.
We need to show that $M(x)$ has a finite limit $U$ as $x\rightarrow\infty$, and that this $U$ satisfies
\[\bh^*_m(\Lambda U-U\hat\Lambda)\bv_k=(\gamma-\gamma^*)\bh^*_m U\bv_k-\bh^*_{m-1}U\bv_k+\bh^*_mU\bv_{k-1}=\bh^*_m L\Delta\bv_k,\]
where $\bh^*_0=\bv_0=\bs 0$ by convention.
For this we compute $\bh^*_i M(x) \bv_j$ using~\eqref{eq:classical_jp} and its analogue for the left chain, and take the limit using  L'H\^opital's rule, which is applicable 
because of~\eqref{eq:Fjp}. This then confirms that 
\[(\gamma-\gamma^*)\bh^*_m M(x)\bv_k-\bh^*_{m-1}M(x)\bv_k+\bh^*_mM(x)\bv_{k-1}\rightarrow \bh^*_m L\Delta\bv_k\]
and the result follows.

\underline{Analytic continuation:}
Finally, it remains to remove the assumption that the real part of every eigenvalue of $-\hat\Lambda$ is large enough. For some $q>0$ we can define new killing rates by
$\om_i(q)=\om_i+q$ and consider the corresponding new matrices $\hat\Lambda(q),\Delta(q)$ (note that $\Lambda$ and $L$ stay unchanged). By choosing $q$ large enough we can
ensure that the real parts of the zeros of $\det(F(\t)-\Delta(q))$ (in the right half complex plane) are arbitrarily large. These zeros are exactly the eigenvalues of $-\hat\Lambda(q)$,
and so the result of our Theorem holds for large enough $q$. 

We now use analytic continuation in $q$ in the domain $\Re(q)>-\min\{\om_1,\ldots,\om_n\}$.
In this domain $e^{\hat\Lambda(q)x}$ is analytic for every $x$, which follows from its probabilistic interpretation.
This and invertibility of $\hat\Lambda(q)$ can be used to show that $\hat\Lambda(q)$ is also analytic. 
Furthermore, one can show that only for a finite number of different $q$'s the matrices $\Lambda$ and $\hat\Lambda(q)$ can have common eigenvalues.
Now we express $U(q)=G(q)^{-1}L\Delta(q)$, where $G(q)$ is formed from the elements of $\Lambda$ and $\hat\Lambda(q)$, see e.g.~\cite{lancaster}. Hence $U(q)$
can be analytically continued to the domain of interest excluding the above finite set of points. Hence also $\bs \phib^q(0)=U(q)^{-1}\1$ in the latter domain,
where $U(q)$ is the unique solution of the corresponding Sylvester equation. In particular, this holds for $q=0$, and the proof is complete.
\end{proof}

\section{Remarks on classical ruin}\label{seccla}

Let us briefly return to the classical ruin concept, i.e. all $\om_i\rightarrow\infty$. From~\eqref{eq:Z}, the matrix of probabilities to reach level $x$ before ruin is in this case given by
\[\p_u[\tau_x^+<\tau_0^-,J(\tau_x^+)]=\matI-Z(0,u)+W(u)W(x)^{-1}Z(0,x),\]
which for $u=0$ reduces to $W(0)W(x)^{-1}Z(0,x)$.
It is known that $W(0)$ is a diagonal matrix with $W_{ii}(0)$ equal to $0$ or $1/c_i$ according to $X_i$ having unbounded variation or bounded variation on compacts, and $c_i>0$ being the linear drift of $X_i$ (the premium density in case of \eqref{cl}).

In order to obtain survival probabilities when $\mu>0$ we need to compute 
\[\bs t=\lim_{x\rightarrow\infty}W(x)^{-1}Z(0,x)\1,\]
which similarly to the proof of Theorem~\ref{thm:limit} is a non-trivial problem.
Using recent results from~\cite{ivanovs_potential}, in particular Lemma~1, Proposition 1 and Lemma 3, we find that
this limit is given by 
\[\bs t=\mu\Delta_\bpi^{-1}\bpi_{\widetilde{\Lambda}}^T,\]
where $\bpi_{\widetilde{\Lambda}}$ is the stationary distribution associated with $\tilde \Lambda$, and the latter corresponds to the time-reversed process.
Hence 
the probability of survival according to the classical ruin concept with zero initial capital and $J(0)=i$ is given by
\begin{equation}\label{eq:classical_surv}
 \frac{\mu}{c_i}\frac{(\pi_{\widetilde{\Lambda}})_i}{\pi_i},
\end{equation}
if $X_i$ is of bounded variation, and 0 otherwise. In the case of the classical Cram\'er-Lundberg model ($n=1$) this further simplifies to the well-known expression $\mu/c$. 

The simplicity of all the terms in~\eqref{eq:classical_surv} motivates a direct probabilistic argument, which we provide in the following.
Assuming that $\mu>0$ and $X_i$ is a bounded variation process with linear drift $c_i$, 
we consider $\p_i(\tau_0^->e_q)=\p_i(\underline X(e_q)=0)$ (with an independent exponentially distributed $e_q$), which provides the required vector of survival probabilities upon taking $q\downarrow 0$. 
According to a standard time-reversal argument we write
\[\p_i(\underline X(e_q)=0|J(e_q)=j)=\tilde \p_j(X(e_q)-\overline X(e_q)=0|J(e_q)=i),\]
which yields
\begin{equation}\label{eq:timerev_ruin}
 \p_i(\tau_0^->e_q)=\sum_j\tilde \p_j(X(e_q)=\overline X(e_q),J(e_q)=i)\frac{\pi_j}{\pi_i}.
\end{equation}
Moreover
\begin{align*}
 &\tilde\p_j(X(e_q)=\overline X(e_q),J(e_q)=i)=q\,\tilde\e_j\int_0^{e_q}\ind{X(t)=\overline X(t),J(t)=i}\D t\\&=\frac{q}{c_i}\,\tilde\e_j\int_0^{\overline X(e_q)}\ind{J(\tau_x^+)=i}\D x,
\end{align*}
where the last equality follows from the structure of the sample paths (or local time at the maximum).
It is known that $\overline X(t)/t\rightarrow \mu$ as $t\rightarrow\infty$, which then shows that the above expression converges to $\frac{\mu}{c_i}(\pi_{\tilde\Lambda})_i$ as $q\downarrow 0$,
where the interchange of limit and integral can be made precise using the generalized dominated convergence theorem. Combining this with~\eqref{eq:timerev_ruin} yields~\eqref{eq:classical_surv}.

\section{A numerical example}\label{secnum}

Let us finally consider a numerical illustration of our results for a 
Markov-modulated Cram\'er-Lundberg model~\eqref{cl} with two states, exponential claim sizes with mean 1 in both states, premium densities $c_1=c_2=1$, 
claim arrival rates $\beta_1=1$, $\beta_2=0.5$, observation rates $\om_1=0.4,\om_2=0.2$,
and the Markov chain $J(t)$ having transition rates 1, 1, which results in the asymptotic drift $\mu=1/4>0$.
For this model we specify the matrix-valued functions $F(\theta)$, see~\cite[Prop.~4.2]{asm_ruin}, and $\hat F(\theta)$, cf.~\eqref{eq:hat_F}.
Using the spectral method we determine the matrices $\Lambda$ and $\hat\Lambda$, and then also the matrix $L$ according to Proposition~\ref{prop:L}: 
\begin{align*}
\Lambda=\begin{pmatrix}
-1.39& 1.39\\
1.16& -1.16              
\end{pmatrix}, \hat\Lambda=\begin{pmatrix}
-1.99& 1.20\\
1.09& -1.45             
\end{pmatrix}\,\text{and}\; L=\begin{pmatrix}
2.63& 1.47\\
1.47& 2.44            
\end{pmatrix}.
\end{align*}
We use Theorem~\ref{thm:limit} to compute the vector of survival probabilities for zero initial capital:
\begin{align*}&U=\begin{pmatrix}
1.58 & 0.58\\
0.53 & 1.54
\end{pmatrix},&  \phib(0) =U^{-1}\1=\begin{pmatrix}
0.45\\ 0.49              
\end{pmatrix}.\end{align*}

Furthermore, Corollary~\ref{cor:results} yields the vector of survival probabilities for an arbitrary initial capital $u\geq 0$ in terms of a matrix-valued function $W(x)$.
Due to the exponential jumps, the matrix $W(x)$ has an explicit form, 
which can be obtained using so-called fluid embedding to convert our model into a Markov modulated linear drift model for which $W(x)$ is known, see e.g.~\cite[Sec.~7.7]{thesis}.
Figure~\ref{fig:surv} depicts the survival probabilities as a function of the initial capital~$u$.
\begin{figure}[h!]
\caption{Survival probabilities $\phi_1(u)$ and $\phi_2(u)$.}\label{fig:surv}
\begin{center}
\includegraphics{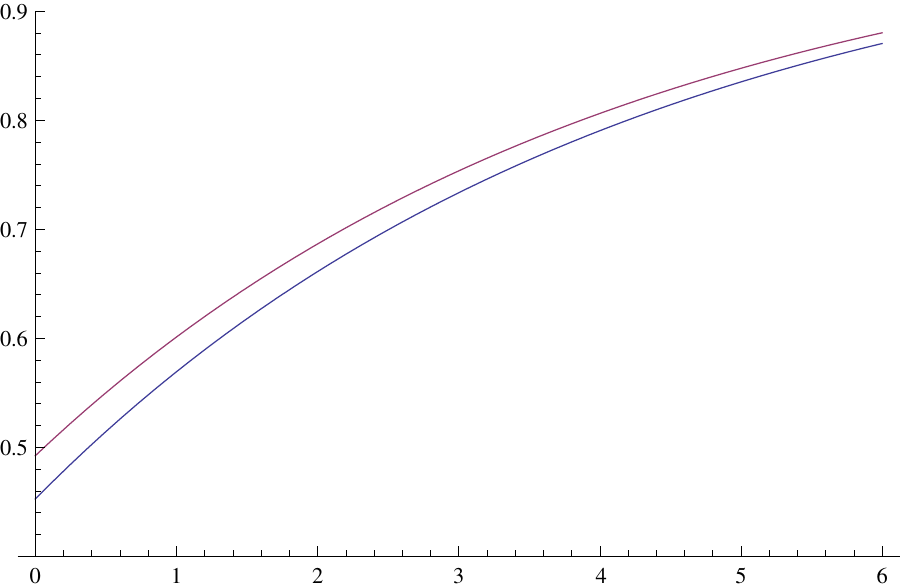}
\end{center}
\end{figure}

Figure~\ref{fig} confirms the correctness of our results. It depicts $(R(x){\bf 1})_1$ (i.e. the probability to reach level $x$ before being observed ruined when starting in state 1 with zero initial capital), 
and the dots represent Monte Carlo simulation estimates of the same quantity based on 10000 runs, the horizontal line representing $\phi_1(0)=0.45$.
One sees that for large values of $x$ the numerical determination of $R(x)$ (as well as $\bs\phi(x)$) becomes a challenge,
which underlines the importance of our limiting result, i.e.~Theorem~\ref{thm:limit}.
\newpage
\begin{figure}[h!]
\caption{Probability of reaching level $x$ before ruin for $J(0)=1,X(0)=0$.}\label{fig}
\begin{center}
\includegraphics{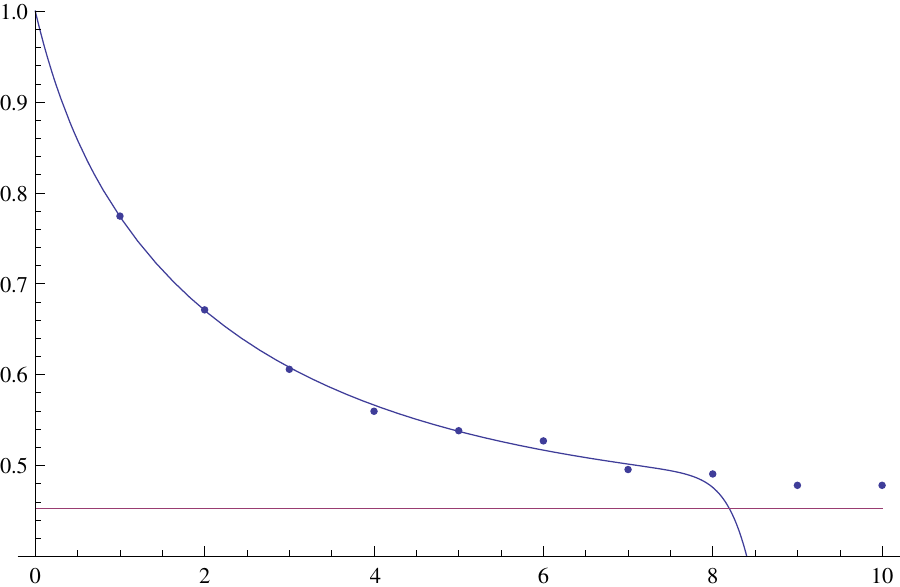}
\end{center}
\end{figure}

%
%
%
%
%
%
%
%
%
%
%
%
%
%
%
%
%
%
%
%
%
%
%
%
%
%
%


\section*{Acknowledgements}
Financial support by the Swiss National Science Foundation Project 200021-124635/1 is gratefully acknowledged.

%
%

%
%
%
%

\bibliographystyle{abbrv}
\bibliography{ruin_observer.bib}

\end{document}